\title{The Nearest Hermitian Inverse Eigenvalue Problem Solution with Respect to the 2-Norm}

\author{
Marcel Padilla\thanks{ Indian Institute of Technology Mandi, DAAD rise summer internship. Student assistant at the Technical University of Berlin \textit{padilla@math.tu-berlin.de}}\\
\and
Benedikt Kolbe\thanks{Technical University of Berlin}
\and
Aniruddha Chakraborty\thanks{School of Basic Sciences,Indian Institute of Technology Mandi}
}



\documentclass{article}
\usepackage{graphicx}
\usepackage{float}
\usepackage{amssymb}
\usepackage{amsmath}
\usepackage{amsthm}
\usepackage{mathtools}
\usepackage{algorithm}
\usepackage{bbm}
\usepackage[noend]{algpseudocode}
\usepackage{xcolor}
\usepackage{mdframed}

\usepackage{csquotes}
\usepackage{epigraph}
\setlength\epigraphwidth{12cm}
\setlength\epigraphrule{0pt}
\usepackage{etoolbox}
\makeatletter
\patchcmd{\epigraph}{\@epitext{#1}}{\itshape\@epitext{#1}}{}{}
\makeatother

\usepackage[left=3cm,top=3cm,right=3cm,bottom=3cm,bindingoffset=0.5cm]{geometry}

\usepackage{tikz}
\usetikzlibrary{fadings}
\usetikzlibrary{positioning,calc}
\usetikzlibrary{shapes,arrows}
\tikzstyle{decision} = [diamond, draw, fill=red!50, 
    text width=4.5em, text badly centered, node distance=3cm, inner sep=0pt]
\tikzstyle{block} = [rectangle, draw, fill=blue!20, 
    text width=5em, text centered, rounded corners, minimum height=4em]
\tikzstyle{line} = [draw, -latex']
\tikzstyle{cloud} = [draw, ellipse,fill=red!20, node distance=3cm,
    minimum height=2em]

\usepackage{times}


\newcommand{\paper}{paper {}} 

\newcommand{\Rn}{\mathbb{R}^n}
\newcommand{\Cnn}{\mathbb{C}^{n\times n}}
\newcommand{\Rnn}{\mathbb{R}^{n\times n}}
\newcommand{\reff}[1]{\textbf{(\ref{#1})}}
\newcommand{\ti}[1]{\widetilde{#1}}
\newcommand{\No}{\mathcal{N}}
\newcommand{\ip}{\mbox{ip}}

\theoremstyle{definition}
\newmdtheoremenv[backgroundcolor=gray!20]{theorem}{Theorem}
\AtBeginEnvironment{theorem}{\begin{minipage}{\textwidth}}
\AtEndEnvironment{theorem}{\end{minipage}}

\theoremstyle{definition}
\newmdtheoremenv[backgroundcolor=gray!20]{lemma}[theorem]{Lemma}
\AtBeginEnvironment{lemma}{\begin{minipage}{\textwidth}}
\AtEndEnvironment{lemma}{\end{minipage}}

\AtBeginEnvironment{proposition}{\begin{minipage}{\textwidth}}
\AtEndEnvironment{proposition}{\end{minipage}}

\AtBeginEnvironment{corollary}{\begin{minipage}{\textwidth}}
\AtEndEnvironment{corollary}{\end{minipage}}

 \theoremstyle{definition}
\newmdtheoremenv[backgroundcolor=gray!20]{definition}[theorem]{Definition}
\AtBeginEnvironment{definition}{\begin{minipage}{\textwidth}}
\AtEndEnvironment{definition}{\end{minipage}}

\theoremstyle{definition}
\newmdtheoremenv[backgroundcolor=gray!0]{remark}{Remark}
\AtBeginEnvironment{remark}{\begin{minipage}{\textwidth}}
\AtEndEnvironment{remark}{\end{minipage}}

\begin{document}
\maketitle
\hrulefill
\begin{abstract}
    Assume that the eigenvalues of a finite hermitian linear operator have been deduced accurately but the linear operator itself could not be determined with precision. Given a set of eigenvalues $\lambda$ and a hermitian matrix $M$, this \paper will explain, with proofs, how to find a hermitian matrix $A$ with the desired eigenvalues $\lambda$ that is as close as possible to the given operator $M$ according to the operator 2-norm metric. Furthermore the effects of this solution are put to a test using random matrices and grayscale images which evidently show the smoothing property of eigenvalue corrections.
\end{abstract}



\hrulefill
\section{Introduction}

The aim is to solve the following problem:

\begin{definition}{Nearest Hermitian Inverse Eigenvalue Problem (NHIEP)}

Let $||\ .\ ||$ indicate the operator 2-norm. Given the values $\lambda_1,...,\lambda_n \in \mathbb{C}$ and a hermitian matrix $M\in\Cnn$ find a hermitian matrix $A\in\Cnn$ with eignevalues $\lambda_1,...,\lambda_n$ such that $||A-M||$ is minimal. We call $A$ a solution to the NHIEP of $M$ with eigenvalues $\lambda_1,...,\lambda_n$.

\end{definition}

\quad Inverse eigenvalue problems (IEP) are defined by having a priory partial knowledge of the eigenvalues without having complete information of the linear operator associated to them \cite{Chu98inverseeigenvalue}. Solving an IEP means finding a suitable linear operator under numerous conditions dictated by the application at hand. Important works in this area are excellently collected by Chu, M. and Golub, G. \cite{chu2005inverse}. Every condition placed on the matrix changes the nature of this task and we will focus for a way to compute the optimal solution for the NHIEP. At the end we will discuss possible applications and the level of effect this solution has. 

The core motivation and to solve this problem is it's application in every day numerical computations. Numerical errors in matrix computations alter the eigenvalues in unpredictable ways which will affect ongoing operations, which is critical given that eigenvalues determine important properties and can be very sensitive in algorithms. Important as for example with energy levels in quantum mechanics \cite{weinberg2013lectures} and sensitive as in the Gauss-Seidel iterative solver \cite{demmel1997applied}, when dealing with long term Markov chain behaviour \cite{bremaud2013markov} and whenever it is crucial to keep eigenvalues equal to zero. The numerical errors cause the resulting matrix to remain near the theoretical outcome with high probabilities, which is why with a priory knowledge of the eigenvalues, we expect the solution of the NHIEP to be the best possible guess for theoretical solution because it is characterized by being as close as possible to $M$. This argument also provides reasons to expect the solution of the NHIEP to have a smoothing effect on images.


\begin{center}
\begin{tabular}{| l | c |} 
\hline
\textbf{Purpose} & \textbf{Notation}\\ 
\hline
Fixed dimension & $n\in \mathbb{N}$\\
\hline
Space of n-dimensional hermitian matrices & $H(n)$\\ 
\hline
Space of n-dimensional unitary matrices & $U(n)$\\
\hline
Metric in use & $||A||=\max_{||x||_2=1}||Ax||_2=\sqrt{\rho(A^*A)}$\\
\hline
Conjugate transpose, transpose & $( \ )^* \ \ , \ \ ( \ )^T$ \\
\hline
\end{tabular}
\end{center}

\begin{center}
\begin{tabular}{| c | c | c| c | c | c |} 
\hline
 \textbf{Matrix} & \textbf{Purpose} & \textbf{Eigenvalues} & \textbf{Eigenvectors} & \textbf{Eigen transf.} & \textbf{Diagonalization} \\ 
\hline
 $M\in H(n)$ & Given matrix & $\mu = (\mu_1,...,\mu_n)$ & $w_1,...,w_n$ & $W\in U(n)$ & $D(\mu)\in H(n)$\\ 
\hline
 $A\in H(n)$ & Solution of NHIEP & $\lambda= (\lambda_1,...,\lambda_n)$ & $v_1,...,v_n$ & $V\in U(n)$ & $D(\lambda)\in H(n)$ \\ 
\hline
\end{tabular}
\end{center}

Where $\rho$ refers to the greatest absolute value of the spectrum. We group the set of eigenvalues $\lambda_1,...,\lambda_n$ into a vector $\lambda\in\mathbb{C}^n$ for convenience. We also define:



$$
\Lambda_{\lambda} := \{ A \in \Cnn \ |  A \mbox{ has eigenvalues } \lambda_1,...,\lambda_n \mbox{ with equal multiplicities } \}
$$


\section{Derivation of Solution}\label{sec:dos}

Let us now gather some Lemmas and ideas to help us solve the NHIEP. We write our proofs using many known mathematical theorems found in common text books \cite{IMM2012-03274},\cite{todd1977basic}, \cite{franklin2000matrix} and \cite{fischer2005lineare}.

\begin{lemma}{Existence of Possible Candidates}\label{Lemma:EoPC}

Given $M\in H(n)$ and eigenvalues $\lambda\in \mathbb{C}^n$, a matrix $A\in \Lambda(\lambda)\cap H(n)$ exist if and only if $\lambda\in\Rn$.
\end{lemma}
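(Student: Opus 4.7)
The plan is to prove the two implications separately, as this is a standard iff characterization.

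For the forward direction (existence implies realness), I would take any $A \in \Lambda_{\lambda} \cap H(n)$ and argue that every eigenvalue of a hermitian matrix is real. Given an eigenpair $(\lambda_i, v)$ with $v \neq 0$, I compute $\langle Av, v\rangle$ two ways: on one hand it equals $\lambda_i \langle v, v \rangle$; on the other hand, using hermiticity $A = A^*$, it equals $\langle v, Av\rangle = \overline{\lambda_i}\langle v, v\rangle$. Since $\langle v,v\rangle > 0$ we conclude $\lambda_i = \overline{\lambda_i}$, so $\lambda_i \in \mathbb{R}$. Running this for each $i$ shows $\lambda \in \mathbb{R}^n$.

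For the converse (realness implies existence), I would simply exhibit a witness, namely the diagonal matrix $D(\lambda) = \mathrm{diag}(\lambda_1,\ldots,\lambda_n)$. Since each $\lambda_i \in \mathbb{R}$, we have $D(\lambda)^* = D(\overline{\lambda}) = D(\lambda)$, so $D(\lambda) \in H(n)$. Its eigenvalues are exactly the diagonal entries with the correct multiplicities, so $D(\lambda) \in \Lambda_{\lambda}$, establishing $D(\lambda) \in \Lambda_{\lambda} \cap H(n)$.

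Neither direction involves a substantive obstacle, since both are classical facts about hermitian matrices; the only care required is to verify that the multiplicities match in the converse construction, which is immediate from the fact that the eigenvalues of a diagonal matrix are its diagonal entries counted with their multiplicities.
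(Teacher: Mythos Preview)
Your proof is correct and follows essentially the same approach as the paper: both directions are handled identically, with the converse exhibiting $\mathrm{diag}(\lambda)$ as a witness, and the forward direction invoking the realness of eigenvalues of hermitian matrices (the paper cites the spectral theorem, while you unpack the standard inner-product argument, which amounts to the same thing).
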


\begin{proof}
``$\Rightarrow$`` By the spectral theorem \cite{Hawkins19751} we know that any $A\in H(n)$ must have real eigenvalues. Demanding $A$ to have complex eigenvalues is thus impossible. 

``$\Leftarrow$`` $A:=diag(\lambda)\in \Lambda(\lambda)\cap H(n)$ proofs the existence by an example.
\end{proof}

From here on we will assume that our desired eigenvalues $\lambda$ are real. Lemma \reff{Lemma:EoPC} does not prove the existence of a solution of the NHIEP since we do not yet know if a sequence $A_i\in \Lambda(\lambda)\cap H(n)$ minimizing $||A_i-M||$ with increasing $i$ actually converges. However, the existence of a minimum rather than an infimum will become evident with theorem \reff{thr:nhiep}.

By the spectral theorem \cite{Hawkins19751} any hermitian matrix $M$ has a unitary normal basis (UNB) $w_1,...,w_n$ of eigenvectors with real eigenvalues $\{\mu_i\}_{i=1,...,n}$ such that the transformation matrix $W:=(w_1,...,w_n)\in U(n)$ is unitary and $M=W^*diag(\mu)W$. This enables us to parametrize the set $\Lambda(\lambda)\cap H(n)$ on $U(n)$ as seen in the following lemma. 


\begin{lemma}\label{lemma:seq}

If $\lambda\in \mathbb{R}^n \Longrightarrow H(n) \cap \Lambda_{\lambda} = \{ V^*diag(\lambda)V \ | \ V \in U(n) \}$

\end{lemma}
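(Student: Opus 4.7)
The plan is to prove the stated set equality by a direct double inclusion, leveraging the spectral theorem together with the reality assumption on $\lambda$.

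For the inclusion $\supseteq$, I would take an arbitrary $V\in U(n)$ and show that $A:=V^*\operatorname{diag}(\lambda)V$ belongs to $H(n)\cap\Lambda_\lambda$. Hermiticity follows from $A^*=V^*\operatorname{diag}(\lambda)^*V=V^*\operatorname{diag}(\lambda)V=A$, which uses crucially that $\lambda\in\mathbb{R}^n$ so that $\operatorname{diag}(\lambda)$ is itself hermitian. Membership in $\Lambda_\lambda$ follows because unitary similarity preserves the spectrum together with its algebraic multiplicities; the eigenvalues of $A$ are exactly the diagonal entries of $\operatorname{diag}(\lambda)$ counted with multiplicity.

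For the inclusion $\subseteq$, I would start with an arbitrary $A\in H(n)\cap\Lambda_\lambda$ and invoke the spectral theorem exactly as cited in the paragraph preceding the lemma: there exists a unitary $U\in U(n)$ and a vector $\tilde\lambda$ listing the eigenvalues of $A$ in some order such that $A=U^*\operatorname{diag}(\tilde\lambda)U$. Since $A\in\Lambda_\lambda$, the tuple $\tilde\lambda$ agrees with $\lambda$ up to a permutation $\sigma$. I would then let $P_\sigma$ be the associated permutation matrix, which is unitary and satisfies $P_\sigma^*\operatorname{diag}(\lambda)P_\sigma=\operatorname{diag}(\tilde\lambda)$; setting $V:=P_\sigma U$ yields $V\in U(n)$ together with $A=V^*\operatorname{diag}(\lambda)V$, giving the desired representation.

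The main subtlety, and the step I would write out most carefully, is the ordering issue in the second inclusion: the spectral theorem only produces the eigenvalues in \emph{some} order, whereas the right-hand side of the lemma fixes the order prescribed by $\lambda$. The fact that $\Lambda_\lambda$ is defined using multiplicities rather than ordered tuples is what makes the permutation argument go through. Everything else is routine verification using unitarity and the reality of $\lambda$.
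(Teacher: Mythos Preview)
Your proposal is correct and takes essentially the same double-inclusion approach as the paper: the $\supseteq$ direction by direct verification of hermiticity and of the eigenvalues (the paper exhibits the columns of $V$ as eigenvectors explicitly, whereas you cite spectral invariance under unitary similarity), and the $\subseteq$ direction via the spectral theorem. If anything you are more careful than the paper on $\subseteq$, since you explicitly resolve the ordering discrepancy with a permutation matrix, while the paper simply invokes the spectral theorem here and postpones exactly that permutation argument to the subsequent lemma.
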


\begin{proof}

``$\subseteq$`` Is given by the spectral theorem.

``$\supseteq$`` We just check the two required properties. $\lambda\in\Rn\Rightarrow (diag(\lambda))^*=diag(\lambda)$, thus making any $V^*diag(\lambda_1,...,\lambda_n)V$ hermitian because 

$$(V^*diag(\lambda)V)^*=(V)^*(diag(\lambda))^*(V^*)^*=V^*diag(\lambda)V \ \ \ \Rightarrow \ \ \ V^*diag(\lambda)V \in H(n)$$

 Let $e_i$ be the $i$-th canonical vector. The eigenvector to the eigenvalue $\lambda_i$ is the column $v_i$ of $V$ as seen in the following calculation:
 
 $$V^*diag(\lambda)Vv_i=V^*diag(\lambda)e_i=V^*\lambda_ie_i=\lambda_iv_i  \ \ \ \Rightarrow \ \ \ V^*diag(\lambda)V \in \Lambda_{\lambda}$$

\end{proof}

Lemma \reff{lemma:seq} shows us that finding the $A\in\Lambda_{\lambda}\cap H(n)$ to minimize $||A-M||$ is equivalent to searching for $V\in U(n)$ to minimize $||V^*D(\lambda)V-M||_m$. Before we are going to use that though we will get rid of some of the non-uniqueness of the diagonanalization and justify this in a lemma.

\begin{lemma}{Eigenvalue Ordering}\label{lemma:eo}

Given $\hat{\mu}\in\Rn, \  M\in \Lambda_{\hat{\mu}}\cap H(n)$ and $\mu=\mbox{sort}(\hat{\mu})$ as the permuted vector such that $\mu_1\leq...\leq\mu_n$. Then there exists a diagonalization $M=W^*D(\mu)W$.

\end{lemma}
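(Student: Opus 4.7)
The plan is to leverage the spectral theorem to obtain any initial diagonalization of $M$, and then reorder the eigenvectors by conjugating with a permutation matrix so that the eigenvalues appear in nondecreasing order on the diagonal. The whole argument is a routine permutation manipulation; the only thing to check is that the resulting transformation matrix is still unitary, which is immediate because permutation matrices are unitary and products of unitary matrices are unitary.

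More concretely, I would proceed as follows. First, apply the spectral theorem to $M \in H(n)$, together with the hypothesis $M \in \Lambda_{\hat{\mu}}$, to obtain some diagonalization $M = \widetilde{W}^{\ast} D(\hat{\mu}) \widetilde{W}$ with $\widetilde{W} \in U(n)$. Second, since $\mu = \mathrm{sort}(\hat{\mu})$ is just a rearrangement of $\hat{\mu}$, choose a permutation $\sigma$ on $\{1,\dots,n\}$ with $\mu_i = \hat{\mu}_{\sigma(i)}$ and let $P$ be the associated permutation matrix. A direct coordinate computation then shows $P\, D(\hat{\mu})\, P^{\ast} = D(\mu)$, or equivalently $D(\hat{\mu}) = P^{\ast} D(\mu) P$.

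Substituting this into the initial diagonalization gives
$$M = \widetilde{W}^{\ast} P^{\ast} D(\mu) P \widetilde{W} = (P\widetilde{W})^{\ast} D(\mu) (P\widetilde{W}).$$
Setting $W := P\widetilde{W}$ and noting that $W \in U(n)$ because both $P$ and $\widetilde{W}$ are unitary yields the desired diagonalization $M = W^{\ast} D(\mu) W$ with the eigenvalues arranged in the prescribed sorted order.

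There is really no significant obstacle here; the content of the lemma is essentially bookkeeping. The one spot that requires a small verification is the identity $P\, D(\hat{\mu})\, P^{\ast} = D(\mu)$, which can be checked entrywise using $(P)_{ij} = \delta_{i,\sigma(j)}$ (or the opposite convention, depending on how one defines $P$, in which case one permutes the columns of $\widetilde{W}$ from the other side instead). Up to this cosmetic choice of convention, the proof is complete.
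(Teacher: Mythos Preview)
Your proof is correct and follows essentially the same route as the paper: obtain an initial unitary diagonalization via the spectral theorem, conjugate by a permutation matrix to rearrange the diagonal entries into sorted order, and set $W := P\widetilde{W}$. The paper's argument is identical up to notation (it writes $\hat{W}$ for your $\widetilde{W}$ and verifies the same identity $P^{\ast}D(\mu)P = D(\hat{\mu})$).
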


\begin{proof}

By lemma \reff{lemma:seq} we can start with a diagonalization $M=\hat{W}^*D(\hat{\mu})\hat{W}$. There exists a unique permutation matrix $P\in\{0,1\}^{n\times n}\cap U(n)$ such that $P\hat{\mu}=\mbox{sort}(\hat{\mu})=\mu$ and thus $P^*\mu=\hat{\mu}$. $D(\mu)P$ permutes the columns and $P^*D(\mu)$ permutes the rows so that together they perform the permutation of the diagonal entries $P^*D(\mu)P=D(\hat{\mu})$. We define $W:=P\hat{W}\in U(n)$ and check if $W^*D(\mu)W=M$.

$$W^*D(\mu)W=(P\hat{W})^*D(\mu)(P\hat{W}) = \hat{W}^*(P^*D(\mu)P)\hat{W} = \hat{W}^*D(\hat{\mu})\hat{W}=M$$

\end{proof}

Lemma \reff{lemma:eo} is useful for standardization as we can now always sort the eigenvalues stored in the vector $\mu,\lambda\in\Rn$ in increasing order. This will later greatly simplify theorem \reff{thr:nhiep}.

But first we need to take another look at the 2-norm. Recall that for hermitian matrices specifically the two norm can be given by the the maximum absolute value of its eigenvalues, precisely $||A||=\rho(A)$.

This also means that the  2-norm only depends on the eigenvalues, thus remaining invariant under transformations that do not change the eigenvalues such as unitary transformations. This is essential for the next lemma to rewrite the NHIEP.

\begin{equation}\label{eq:2n}
\lambda\in\Rn,\ A\in \Lambda_{\lambda}\cap H(n),\ W\in U(n)\Rightarrow
\begin{cases}
\ \ W^*AW\in\Lambda_{\lambda}\cap H(n)\\[3mm]
\ \ ||A||=||W^*AW||\equiv\max_{i=1}^n|\lambda_i|
\end{cases}
\end{equation}

\begin{lemma}{NHIEP Equivalence}\label{lemma:nhiepeq}

The NHIEP is equivalent to finding $T\in U(n)$ to minimize $||T^*diag(\lambda)T - diag(\mu) ||$. A solution can then be expressed as $A:=M+W^*(T^*diag(\lambda)T - diag(\mu))W$.

\end{lemma}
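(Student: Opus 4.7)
The plan is to reduce the free unitary $V$ appearing in the parametrization of Lemma \ref{lemma:seq} to a new unitary $T$ by absorbing the eigenvector matrix $W$ of $M$, using unitary invariance of the operator 2-norm.

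First, I would invoke Lemma \ref{lemma:eo} (and the spectral theorem) to write $M = W^{*}\mathrm{diag}(\mu)W$ for some $W \in U(n)$. By Lemma \ref{lemma:seq}, every candidate $A \in \Lambda_{\lambda} \cap H(n)$ has the form $A = V^{*}\mathrm{diag}(\lambda)V$ with $V \in U(n)$, so the NHIEP amounts to minimizing $\|V^{*}\mathrm{diag}(\lambda)V - W^{*}\mathrm{diag}(\mu)W\|$ over $V \in U(n)$.

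Next, I would apply the unitary invariance from \reff{eq:2n}, conjugating the argument by $W$: since $W(A-M)W^{*}$ is similar to $A-M$ via a unitary transformation, the two have identical operator 2-norms. This yields
$$\|V^{*}\mathrm{diag}(\lambda)V - W^{*}\mathrm{diag}(\mu)W\| \;=\; \|WV^{*}\mathrm{diag}(\lambda)VW^{*} - \mathrm{diag}(\mu)\|.$$
Setting $T := VW^{*}$, which is unitary as a product of unitaries, the expression becomes $\|T^{*}\mathrm{diag}(\lambda)T - \mathrm{diag}(\mu)\|$ as claimed. Since $V \mapsto VW^{*}$ is a bijection of $U(n)$ onto itself (with inverse $T \mapsto TW$), minimizing over $V$ and over $T$ are equivalent problems and share the same minimum value.

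Finally, I would recover the explicit formula for $A$: given an optimal $T$, the corresponding $V = TW$ yields $A = V^{*}\mathrm{diag}(\lambda)V = W^{*}T^{*}\mathrm{diag}(\lambda)TW$, and a direct substitution shows $M + W^{*}\bigl(T^{*}\mathrm{diag}(\lambda)T - \mathrm{diag}(\mu)\bigr)W = W^{*}\mathrm{diag}(\mu)W + W^{*}T^{*}\mathrm{diag}(\lambda)TW - W^{*}\mathrm{diag}(\mu)W = A$. There is no real obstacle here; the only point that needs a word of care is verifying that the map $V \mapsto VW^{*}$ is genuinely a bijection of $U(n)$, so that the transformed minimization problem is truly equivalent rather than merely an upper bound.
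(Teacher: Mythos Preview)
Your proposal is correct and follows essentially the same route as the paper: parametrize candidates via Lemma~\ref{lemma:seq}, set $T=VW^{*}$ using the group structure of $U(n)$, and invoke the unitary invariance in \reff{eq:2n} to strip off $W$. The only cosmetic difference is the order of operations---the paper substitutes $V=TW$ first and then factors out $W^{*}(\cdot)W$, whereas you conjugate by $W$ first and then identify $T$---but the argument and its ingredients are the same.
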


\begin{proof}

Let $M=W^*D(\mu)W$ and $W,V\in U(n)$. Since $U(n)$ is a group \cite{fischer2005lineare} $T:=VW^*$ is also unitary. Thus for each $V\in U(n)$ we can find a unique $T\in U(n)$ such that $V=TW$, meaning also that $\{TW\ | \ T\in U(n)\}$ parametrizes $U(n)$.

We will now use previous lemmata to rewrite the NHIEP step by step. We recall the original form as:

$$ \mbox{Find } A\in H(n) \cap \Lambda_{\lambda} \mbox{ to minimize }  ||A - M || $$

Lets replace $M$ by its diagonalization and use lemma \reff{lemma:seq} to reparametrize $H(n) \cap \Lambda_{\lambda}$ to $\{ V^*diag(\lambda)V \ | \ V \in U(n) \}$.

$$ \mbox{Find } V\in U(n) \mbox{ to minimize }\  ||V^*diag(\lambda)V - W^*diag(\mu)W || $$

Now we apply the reparametrization of $U(n)$ to replace $V$.

$$ \mbox{Find } T\in U(n) \mbox{ to minimize }  ||(TW)^*diag(\lambda)(TW) - W^*diag(\mu)W || = ||W^*( \ T^*diag(\lambda)T - diag(\mu) \ )W ||$$

We will name $Q(T):= T^*diag(\lambda)T - diag(\mu)\in H(n)$. Notice that $||W^*Q(T)W||=||Q(T)||$ due to equation \reff{eq:2n} this transformations preserves the eigenvalues. This means that the solution $T\in U(n)$ to minimize $||Q(T)||$ also reveals a solution to the NHIEP through the equation by rearranging what we just did. 

$$A-M=W^*Q(T)W \Rightarrow A=M+W^*Q(T)W$$

\end{proof}

We can now turn our attention to minimizing $||Q(T)||$ over the set $T\in U(n)$. This is where the order of the eigenvalues plays an important role. Lemma \reff{lemma:eo} guarantees us that we can always arrange them to be monotonically increasing inside the diagonalization. We need to recall Weyl's inequality in matrix theory. 

\begin{theorem}{Weyl's inequality}\label{thr:weyl}

Let $A,B,C\in H(n)$ and $\alpha_1\leq...\leq\alpha_n,\ \beta_1\leq,...,\leq\beta_n,\ \gamma_1\leq...,\leq\gamma_n$ the respective ordered eigenvalues. If $A+B=C$, then for $i,j\in\mathbb{N}$ with $1\leq i\leq n$ and $1\leq i+j-1\leq n$ we have

$$ \alpha_i+\beta_j\leq\gamma_{i+j-1} $$



\end{theorem}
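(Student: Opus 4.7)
The plan is to exploit the variational characterization of Hermitian eigenvalues together with a dimension count that produces a single unit vector simultaneously controlled by $A$, $B$, and $C$. The key observation is that the hypothesis $A+B=C$ passes directly through Rayleigh quotients, so if one can find a vector $x$ that simultaneously lower-bounds $x^*Ax$ by $\alpha_i$, lower-bounds $x^*Bx$ by $\beta_j$, and upper-bounds $x^*Cx$ by $\gamma_{i+j-1}$, then the inequality drops out.

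First, using the spectral theorem, I would introduce three carefully chosen subspaces of $\Cn$. Let $V_A$ be the span of the eigenvectors of $A$ associated with $\alpha_i,\ldots,\alpha_n$, so that $\dim V_A=n-i+1$ and every $x\in V_A$ satisfies $x^*Ax\geq \alpha_i\|x\|_2^2$. Let $V_B$ be the analogous span for $B$ and $\beta_j,\ldots,\beta_n$, giving $\dim V_B=n-j+1$ and $x^*Bx\geq \beta_j\|x\|_2^2$ on $V_B$. Reversing direction, let $V_C$ be the span of the eigenvectors of $C$ for the \emph{smallest} eigenvalues $\gamma_1,\ldots,\gamma_{i+j-1}$, so that $\dim V_C=i+j-1$ and $x^*Cx\leq \gamma_{i+j-1}\|x\|_2^2$ on $V_C$.

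Next, I would apply the elementary subspace inequality $\dim(U\cap W)\geq \dim U+\dim W-n$ twice inside $\Cn$ to obtain
$$\dim(V_A\cap V_B\cap V_C)\geq (n-i+1)+(n-j+1)+(i+j-1)-2n=1.$$
The standing assumption $1\leq i+j-1\leq n$ is precisely what guarantees that the three subspace dimensions are legitimate values in $\{1,\ldots,n\}$, so this count is meaningful. Taking any unit vector $x$ in the intersection and using $A+B=C$ then gives
$$\alpha_i+\beta_j\;\leq\; x^*Ax+x^*Bx \;=\; x^*(A+B)x \;=\; x^*Cx\;\leq\; \gamma_{i+j-1},$$
which is exactly the claim.

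The main obstacle I expect is the bookkeeping in choosing the three subspaces consistently: one must pick the \emph{upper} eigenspaces of $A$ and $B$ (in order to lower-bound the corresponding Rayleigh quotients) but the \emph{lower} eigenspace of $C$ (in order to upper-bound it), and then size them so that the intersection count yields exactly dimension $\geq 1$ under the index constraint. Once that is arranged, the algebra is a one-liner. An alternative route would be to invoke the Courant--Fischer min--max formula for $\gamma_{i+j-1}$ and plug in a well-chosen $(i+j-1)$-dimensional test space, but the explicit witness-vector construction above seems more transparent and reuses only the spectral theorem that has already been cited.
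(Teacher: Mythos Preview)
Your argument is correct and is in fact the standard proof of Weyl's inequality via the spectral theorem and a dimension count on three eigenspaces; the only cosmetic point is that when eigenvalues repeat one should phrase $V_A$, $V_B$, $V_C$ as spans of appropriately indexed orthonormal eigenvectors (rather than ``the eigenvectors associated with $\alpha_i,\ldots,\alpha_n$''), but this is implicit in your use of the spectral theorem and does not affect the logic.

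As for comparison: the paper does not actually prove this theorem. It merely \emph{recalls} Weyl's inequality as a known result from matrix theory and then uses it to establish the lower $2$-norm bound (Theorem~\ref{thr:l2nb}). So there is no paper proof to compare against; your proposal supplies a self-contained justification where the paper simply cites the literature.
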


This theorem allows us to proof the following essential inequality:

\begin{theorem}{Lower 2-norm bound}\label{thr:l2nb}

Let $A,B,C\in H(n)$ and $\alpha_1\leq...\leq\alpha_n,\ \beta_1\leq,...,\leq\beta_n$ the ordered real eigenvalues of $A$ and $B$ respectively. If $A-B=C$, then 
$$ \max_{i=1}^n|\alpha_i-\beta_i|\leq||C||$$


\end{theorem}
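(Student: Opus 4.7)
The plan is to apply Weyl's inequality (Theorem \reff{thr:weyl}) twice to squeeze $\alpha_i-\beta_i$ between the smallest and largest eigenvalues of $C$, and then recall that for a hermitian matrix $\|C\|$ coincides with the spectral radius $\rho(C)$.

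First I would denote the ordered real eigenvalues of $C$ by $\gamma_1 \leq \ldots \leq \gamma_n$; since $C = A - B$ is hermitian, we have $\|C\| = \rho(C) = \max(|\gamma_1|,|\gamma_n|)$. Writing $A = B + C$, I can apply Theorem \reff{thr:weyl} with $B$ in the role of the first summand, $C$ in the role of the second summand, and $A$ in the role of the sum. Taking $j = 1$ in Weyl's inequality yields $\beta_i + \gamma_1 \leq \alpha_i$ for every $i$, which rearranges to the lower bound $\alpha_i - \beta_i \geq \gamma_1$.

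For the matching upper bound I would rewrite the hypothesis as $B = A + (-C)$ and apply Weyl's inequality again, this time with $-C$ in the role of the second summand. The key subtlety here is the relabelling: the ordered eigenvalues of $-C$ are $-\gamma_n \leq -\gamma_{n-1} \leq \ldots \leq -\gamma_1$, so the $j$-th smallest eigenvalue of $-C$ is $-\gamma_{n+1-j}$. Applying Weyl with $j = 1$ then gives $\alpha_i + (-\gamma_n) \leq \beta_i$, i.e., $\alpha_i - \beta_i \leq \gamma_n$.

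Combining the two estimates I have $\gamma_1 \leq \alpha_i - \beta_i \leq \gamma_n$ for every index $i$, whence
$$ |\alpha_i - \beta_i| \ \leq \ \max(|\gamma_1|,|\gamma_n|) \ = \ \rho(C) \ = \ \|C\|, $$
and taking the maximum over $i$ finishes the argument. The only real pitfall I anticipate is keeping the indexing straight when Weyl's inequality is invoked on $-C$; once the reversal of the ordered spectrum of $-C$ is handled correctly, both bounds drop out by the same choice $j = 1$ and the rest is immediate.
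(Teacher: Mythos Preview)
Your proof is correct and follows essentially the same strategy as the paper: apply Weyl's inequality twice to trap $\alpha_i-\beta_i$ between $\gamma_1$ and $\gamma_n$, then invoke $\|C\|=\rho(C)=\max(|\gamma_1|,|\gamma_n|)$. The only cosmetic difference is the rearrangement used---the paper negates $B$ (and then $A,C$) and chooses $j=n-i+1$, whereas you rewrite the identity as $A=B+C$ and $B=A+(-C)$ and take $j=1$ each time; your indexing is arguably a bit cleaner, but the substance is identical.
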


\begin{proof}

Since the eigenvalues are ordered we know that 

$$||C||_2=\max_{i=1}^n|\gamma_i| = \max(|\gamma_1|,|\gamma_n|)$$

Let $i,j\in\{1,...,n\}$. Weyl's theorem requires the sum matrices, so we define $\hat{B}:=-B$ with eigenvalues $\hat{\beta}_1\leq...\leq\hat{\beta}_n$ and know immediately that $\hat{\beta}_j=-\beta_{n-j+1}$. Applying Weyl's theorem to $A+\hat{B}=C$ setting $j=n-i+1$ returns us:

\begin{equation}\label{eq:lnb1}
 \alpha_i+\hat{\beta}_j\leq\gamma_{i+j-1}\Rightarrow \alpha_i-\beta_i\leq\gamma_{n}
\end{equation}

We can also define $\hat{A}:=-A,\ \hat{C}:=-C$ with eigenvalues $\hat{\alpha}_1\leq...\leq\hat{\alpha}_n,\ \hat{\gamma}_1\leq...,\leq\hat{\gamma}_n$ and apply Weyl's theorem to $\hat{A}+B=\hat{C}$ which is equivalent to $-(A-B)=-C$. Like above, we know that $\hat{\alpha}_i=-\alpha_{n-i+1}$ and $\hat{\gamma}_i=-\gamma_{n-i+1}$. Setting $i=n-j+1$ will lead to:

\begin{equation}\label{eq:lnb2}
\hat{\alpha}_i+\beta_j\leq\hat{\gamma}_{i+j-1}\Rightarrow -\alpha_i+\beta_i\leq-\gamma_{1} \Rightarrow \gamma_1\leq \alpha_i-\beta_i
\end{equation} 

Equation \reff{eq:lnb1} and \reff{eq:lnb2} together give us 

$$ \min(-|\gamma_1|,-|\gamma_n|)\leq |\alpha_i-\beta_i|\leq\max(|\gamma_1|,|\gamma_n|)$$

and thus

$$ \max_{i=1}^n|\alpha_i-\beta_i|\leq\max(|\gamma_1|,|\gamma_n|)=||C||_2 $$


\end{proof}

Theorem \reff{thr:l2nb} provides an important lower bound for us to optimize for the solution of the NHIEP. We have collected enough lemmas and theorems now to resolve the NHIEP with a solution.

\begin{theorem}{NHIEP Solutions}\label{thr:nhiep}

Let $M\in H(n)$ with real eigenvalues $\mu_1\leq...\leq\mu_n$ and $M=W^*D(\mu)W$ for a suitable $W\in U(n)$. Let $\lambda_1\leq...\leq\lambda_n$ be the desired set of real eigenvalues.

Then $A:=W^*D(\lambda)W\in \Lambda_{\lambda}$ is a minimizer of $||A-M||$.

\end{theorem}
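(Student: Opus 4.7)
The plan is to combine Lemma \reff{lemma:nhiepeq} with the lower bound in Theorem \reff{thr:l2nb}: the equivalence lemma reduces the NHIEP to minimizing $\|Q(T)\| = \|T^*D(\lambda)T - D(\mu)\|$ over $T \in U(n)$, and the lower bound will tell us that this quantity is always at least $\max_i |\lambda_i - \mu_i|$, a value which is attained by the trivial choice $T = I$. Undoing the reparametrization with $A = M + W^* Q(I) W$ will then produce $A = W^* D(\lambda) W$.

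First I would compute the candidate value. Setting $T := I$ gives $Q(I) = D(\lambda) - D(\mu) = D(\lambda - \mu)$, which is diagonal with entries $\lambda_i - \mu_i$, so $\|Q(I)\| = \max_i |\lambda_i - \mu_i|$.

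Next I would show this is a lower bound for $\|Q(T)\|$ for every $T \in U(n)$. The matrix $T^* D(\lambda) T$ is similar to $D(\lambda)$ and hence has the same eigenvalues $\lambda_1 \leq \dots \leq \lambda_n$; the matrix $D(\mu)$ of course has eigenvalues $\mu_1 \leq \dots \leq \mu_n$; and both are hermitian (as already noted in Lemma \reff{lemma:seq} for the conjugated one, and trivially for $D(\mu)$ since $\mu \in \mathbb{R}^n$). Applying Theorem \reff{thr:l2nb} to $A := T^* D(\lambda) T$, $B := D(\mu)$, and $C := Q(T)$ yields
$$ \max_{i=1}^n |\lambda_i - \mu_i| \leq \|Q(T)\|. $$
Combined with the computation above, this gives $\|Q(I)\| \leq \|Q(T)\|$ for all $T \in U(n)$, so $T = I$ is a minimizer of $\|Q(\cdot)\|$.

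Finally, I would use the reconstruction formula from Lemma \reff{lemma:nhiepeq}:
$$ A = M + W^* Q(I) W = W^* D(\mu) W + W^*\bigl(D(\lambda) - D(\mu)\bigr) W = W^* D(\lambda) W, $$
which is the claimed minimizer. The only potentially subtle point is to make sure the ordering hypothesis of Theorem \reff{thr:l2nb} is legitimately met — this is precisely why Lemma \reff{lemma:eo} was proved, as it guarantees that we may diagonalize $M$ with the eigenvalues of $\mu$ placed along the diagonal in increasing order, and the hypothesis of the theorem assumes the same for $\lambda$. There is no real obstacle beyond recognizing that all the ingredients already assembled fit together in this way.
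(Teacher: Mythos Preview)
Your proof is correct and follows essentially the same route as the paper: reduce via Lemma~\reff{lemma:nhiepeq} to minimizing $\|Q(T)\|$, invoke Theorem~\reff{thr:l2nb} to get the lower bound $\max_i|\lambda_i-\mu_i|$, observe that $T=I$ attains it, and recover $A=W^*D(\lambda)W$ from the reconstruction formula. Your explicit remark about Lemma~\reff{lemma:eo} securing the ordering hypothesis is a nice touch that the paper leaves implicit in the proof itself.
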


\begin{proof}

As mentioned in lemma \reff{lemma:nhiepeq}, the NHIEP can be solved by finding $T\in U(n)$ to minimize the norm of $Q(T):=T^*D(\lambda)T-D(\mu)$. The following equation shows how we can relate this problem to theorem \reff{thr:l2nb}.

\begin{equation}\label{eq:nhiep1}
\underbrace{T^*D(\lambda)T}_{\ti{A}:=}-\underbrace{D(\mu)}_{\ti{B}:=} = \underbrace{Q(T)}_{\ti{C}:=}
\end{equation}

We know instantly that $\ti{A}$ and $\ti{B}$ have the ordered eigenvalues $\lambda_1\leq...\leq\lambda_n$ and $\mu_1\leq...\leq\mu_n$ respectively. Theorem \reff{thr:l2nb} shows us that 

\begin{equation}\label{eq:nhiep2}
\max_{i=1}^n|\lambda_i-\mu_i|\leq||\ti{C}||=||Q(T)||
\end{equation}

Inequality \reff{eq:nhiep2} gives a lower bound for $Q(T)$ meaning that it is impossible to find a $T\in U(n)$ that reduces $||Q(T)||$ further. The very best choice of $T$ we can still hope for is as such that the inequality \reff{eq:nhiep2} turns into an equality. Can we do it? Yes we can! In case of the 2-norm, namely by setting $T=\mbox{Id}_n$ returning $Q(T)=D(\lambda-\mu)$. $||D(\lambda-\mu)||$ is the greatest absolute eigenvalue and thus equal to $\max_{i=1}^n|\lambda_i-\mu_i|$. Note that this simultaneously proves the existence of the minimizer.

Thus by lemma \reff{lemma:nhiepeq} a solution to the NHIEP is

$$A:=M+W^*Q(\mbox{Id}_n)W=W^*D(\mu)W+W^*D(\lambda-\mu)W=W^*D(\lambda)W$$

\end{proof}

Linear maps are not only determined by their entries in a matrix but also by the images of a basis. The NHIEP solution theorem can be interpreted in the following intuitive way as an algorithm that remaps the basis of eigenvectors of $M$.


\epigraph{``Given increasingly sorted real eigenvalues $\{\lambda_i\}_{i=1,...,n}$ and a hermitian matrix $M$, a closest hermitian matrix $A$ from $M$ given the 2-norm with eigenvalues $\{\lambda_i\}_{i=1,...,n}$ is the matrix defined by the mapping that maps a basis of eigenvectors $\{w_i\}_{i=1,...,n}$ of $M$, ordered by their increasing eigenvalues $\{\mu_i\}_{i=1,...,n}$, to $\{\lambda_iw_i\}_{i=1,...,n}$."}

All lemmas and the theorems mentioned in this section are also true if we replace $\Cnn$ by $\Rnn$ throughout the entire setting. Specifically we can replace the hermitian matrices $H(n)$ by symmetric matrices $S(n) \subset H(n)$ and the unitary matrices $U(n)$ with orthogonal matrices $O(n)\subset U(n)$ as every argument used above applies equally well for symmetric matrices with orthogonal matrices. This means that the solution presented here is equally valid for the \textit{nearest symmetric inverse eigenvalue problem}.

\section{Discussion}\label{sec:d}

\quad In this section we will explorer the effects of the solution to the nearest hermitian inverse eigenvalue problem (NHIEP) by looking at some of statistical properties and effects with experiments and visual tests.

\quad Numerical rounding, approximations and data transfer through unreliable channels all cause mistakes in the data we work with. For matrices, eigenvalues determine important properties of a linear operator which is why a small error in an eigenvalue can change critical behaviors of iterative processes. Stochastic processes for example sometimes have critical changes in their development based on their eigenvalues that can distinguish between determined extinction or survival through time, or numerical iterative methods such as the conjugate gradient method's convergence behaviour is determined by the eigenvalues of the input matrix.

Eigenvalues of linear operators need special attention, and therefore we want to use the  NHIEP solution to approximate as good as possible in the 2-norm sens the original hermitian matrix from a faulty one using the correct eigenvalues as input.

The NHIEP solution theorem \reff{thr:nhiep} also tells us that a better solution can not be archived. There might be other matrices that are solutions to the NHIEP but they can not be more optimal and we have no guarantee for the existence of other solutions. To reference this we will name the computation of the solution as a function $\Psi$.

\begin{definition}{NHIEP Solver}

Let $\lambda\in\Rn$ and $M\in H(n)$. The nearest hermitian inverse eigenvalue problem (NHIEP) solution is an an element $A\in H(n)\cap \Lambda_{\lambda}$ that minimizes $||A-M||_2$ and is given by

$$\Psi : H(n) \times \mathbb{R}^n \longrightarrow H(n)$$
$$ ( \ M \ , \ \lambda \ )\mapsto W^*diag(\lambda_1,...,\lambda_n)W$$

where $M\in U(n)$ is as such that  $M=W^*diag(\mu_1,...,\mu_n)W$ with increasingly sorted elements inside $\mu\in\Rn$.

\end{definition}

Note that this function is well defined thanks to the instructions given in theorem \reff{thr:nhiep}.

Algorithm \ref{alg:nhiep} is the outline of an implementation of the function $\Psi$.

\makeatletter
\def\BState{\State\hskip-\ALG@thistlm}
\makeatother

\begin{algorithm}
\caption{Hermitian Inverse Eigenvalue Problem Solver}\label{alg:nhiep}
\begin{algorithmic}[1]
\State \textbf{Input:} Desired eigenvalues $\lambda \in \Rn$, hermitian matrix guess $M \in H(n)$.
\State 
\State Compute $W \in GL(n)$ and $\mu \in \mathbb{R}^n$ by any diagonalization $M=W^*diag(\mu)W$.
\State Orthonormalize the columns of $W=(w_1,...,w_n)$ using the Gram-Schmidt method. 
\State Compute the Permutation Matrix $P$ such that $P\mu$ is sorted by increasing order
\State $W \gets PW$
\State sort $\lambda$ by increasing order
\State $A \gets W^*diag(\lambda)W$
\State 
\State \textbf{Output:} $\Psi(M,\lambda)=A \in H(n)$.
\end{algorithmic}
\end{algorithm}

Note that $W\in GL(n)$ automatically becomes unitary after the orthonormalization. If more than 1 eigenvector has the same eigenvalue the Gram-Schmidt orthonormalization \cite{trefethen1997numerical} will not brake the properties of the columns to be eigenvectors because the eigenspaces are already orthagonal by the spectral theorem.

\subsection{Improvement Ratio}\label{subsec:ipr}

\quad To simplify the implementation we are going to work with the symmetric matricies version of the NHIEP. Let $A=(a_{ij})\in S(n)$ with ordered eigenvalues $\lambda \in \mathbb{R}^n$ be our starting symmetric matrix that we will artificially apply errors to create the matrix $M \in S(n)$ that we will attempt to correct using the NHIEP solver $\Psi$.

The most basic error is to add a scaled normally distributed random number to each entry in A. Let $\No$ be a set of standard normal distributed values. The following table includes all definitions needed to define a measure of improvement experimentally:

\begin{center}\label{table:experiment}
\begin{tabular}{| c | c |}
\hline
\textbf{Symbol} & \textbf{Purpose}\\[2ex] 
\hline
$n\in \mathbb{N}$ & Dimension\\[2ex]
\hline
$A\in S(n),\ a_{ij}\in\No$ & Original matrix\\[2ex]
\hline
$X\in S(n),\ x_{ij}\in\No$ & Distortions matrix\\[2ex]
\hline
$d\in\mathbb{R}_{\geq 0}$ & Error scalar\\[2ex]
\hline
$M=A+dX\in S(n)$ & Distorted matrix\\[2ex]
\hline
$V\in O(n),\ A=V^TD(\lambda)V$ & Diagonalization. $\lambda$ sorted.\\[2ex]
\hline
$W\in O(n),\ M=W^TD(\mu)W$ & Diagonalization. $\mu$ sorted.\\[2ex]
\hline
$\ip(A,M):=1-\dfrac{||A-\Psi(M)||}{||A-M||}$ & Improvement ratio.\\[2ex]
\hline
\end{tabular}
\end{center}

For $d=0$ we get no distortion ($M=A,\ \ip:=0$) and for increasing $d$ the matrix $M$ will become less recognizable as $A$. This will simulate different intensities of errors.


\quad We want to answer the question using the improvement ratio: ``how much closer is the correction $\Psi(M,\lambda)$ to $A$ than the raw distortion $M$ to $A$?.``

For this we will conduct an experiment using Matlab. For each dimension from $n=2,...,20$ and each distortion factor $d\in\{0,25,50,...,175,200\}$ we will generate 1000 symmetric matrices $A\in S(n)$ using normal distributed random values. We then compute each set of eigenvalues $\lambda \in \mathbb{R}$ and distort $A$ to $M$ using $dX$ and compute the \emph{correction} $\Psi(M,\lambda)$.

The decisive quality we want to check is the ratio of distance improvement towards $A$ caused by $\Psi$. An improvement ratio ($\ip$) of 30\% means that the corrected matrix $B$ has moved 30\% closer to the original matrix $A$ relative to $M$.

On the left on Figure \reff{fig:OutputAnalysis} we can see how the correction is more efficient the more it was distorted. Observations involving much higher dimensions hint that the improvement ratio seems to behave similar to $\frac{2}{n+1}$.

\begin{figure}
\centering
\includegraphics[height=4cm]{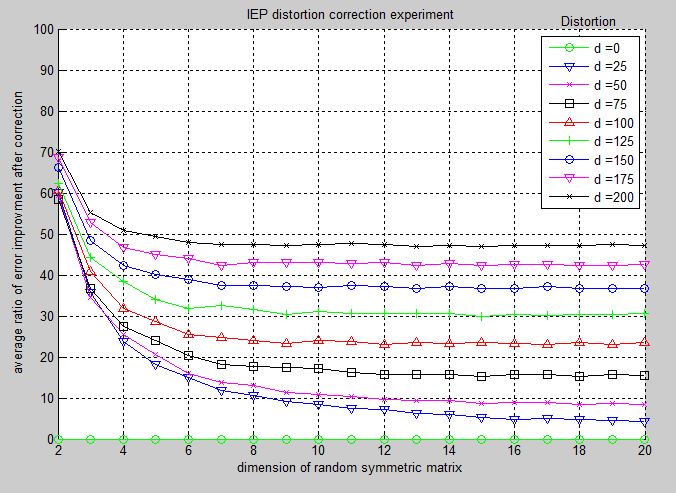}
\includegraphics[height=4cm]{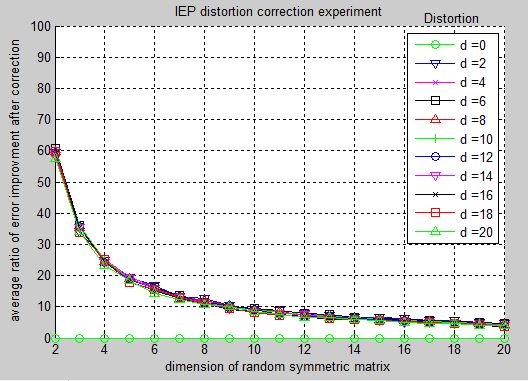}
\label{fig:OutputAnalysis}
\caption{y-axis = average improvement ratio of 1000 samples. x-axis = dimension of square matrix. Left image $d\in\{0,25,...,175,200\}$. Right image $d \in \{0,2,...,18,20\}$.}
\end{figure}

For our practical applications we are interested into smaller distortions which are displayed in the right of Figure \reff{fig:OutputAnalysis} and produce very similar improvement ratios. The decrease in efficiency when the dimensions increase is expected by the fact that the degrees of freedom of $S(n)$ increase quadratically by $\frac{n^2+n}{2}$ while the number of eigenvalues only increases linearly by $n$. With relatively less information at disposal the predictions should be less accurate, and the precise ratio of degrees of freedom is given by $\frac{2}{n+1}$.

\subsection{Image Correction}\label{subsec:ic}

\quad To better visually inspect the effect of the NHIEP solution function $\Psi$ we test it on grayscale images as the are commonly used in image processing \cite{gonzalez2009digital}. We can split any matrix to two symmetric matrices and work on them independently and reunite them at the end. Let $A$ be a grey scaled image matrix with values ranging from 0 to 1. We cut $A$ to two matrices $A_{up},A_{lo}\in S(n)$ by taking the upper and lower triangular matrix of $A$ and reflecting their values to the opposite side respectively. We then compute the sorted eigenvalues $\lambda_{up}\in\Rn$ of $A_{up}$ and $\lambda_{lo}\in\Rn$ of $A_{lo}$ and distort these two matrices to $M_{up},M_{lo} \in S(n)$. $\Psi(M_{up},\lambda_{up})$ and $\Psi(M_{lo},\lambda_{lo})$ are computed as the corrections of $M_{up},M_{lo}$ and are then reunited to form an image again. See figure \reff{fig:imagexperiment} to visually see what we mean to do.

\begin{figure}
\begin{center}

\begin{tikzpicture}[node distance = 2cm, auto, scale=0.60, every node/.style={transform shape}]

    \node [decision] (original) {original matrix \includegraphics[width=0.8cm]{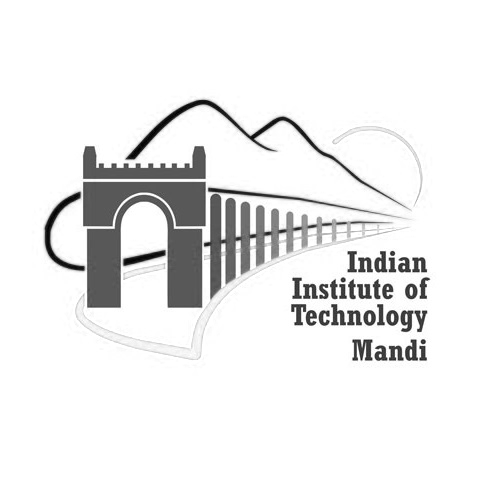}};
    \coordinate [right of=original] (rightoforiginal);
    \node [block, above of=rightoforiginal] (Lo) {\includegraphics[width=0.8cm]{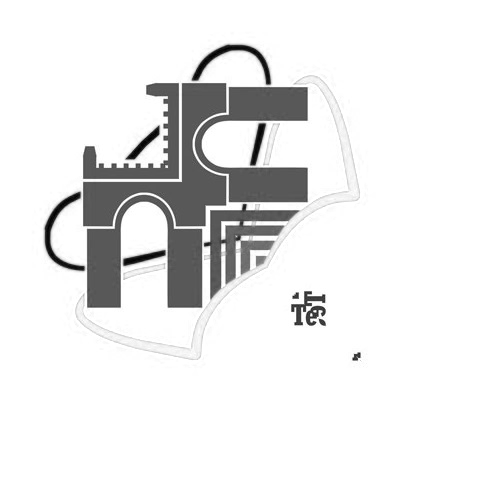}, lower part};
    \node [block, above of=Lo] (Up) {\includegraphics[width=0.8cm]{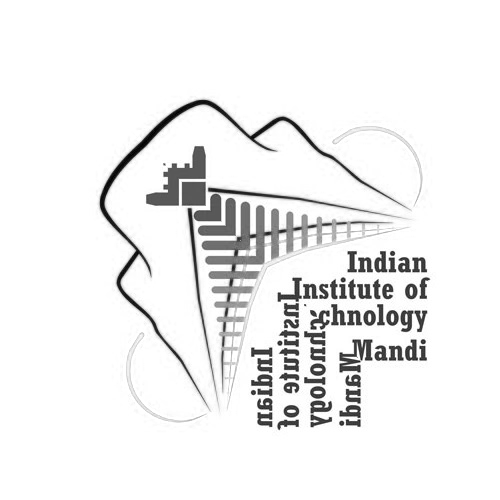} \\ upper part};
    \node [block, right of=Up, node distance=3cm] (eigensendUp) {$\lambda^{[1]}$  \\ eigenvalues};
    \node [block, right of=Lo, node distance=3cm] (eigensendLo) {$\lambda^{[2]}$  \\ eigenvalues};

    \node [block, right of=original, node distance = 10cm] (distorted) { \includegraphics[width=0.8cm]{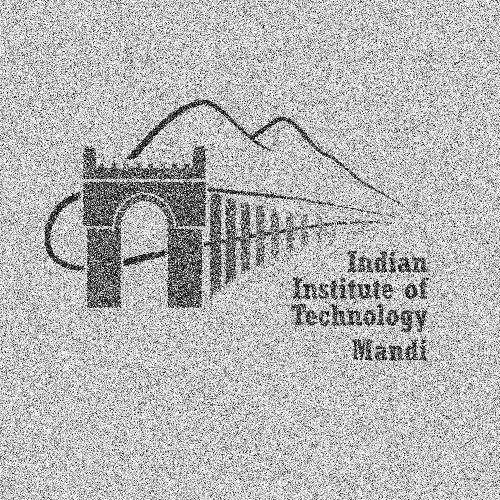} \\ distorted};
    \coordinate [right of=distorted] (rightofdistorted);
    \node [block, above of=distorted, node distance=2cm] (LoDis) {\includegraphics[width=0.8cm,]{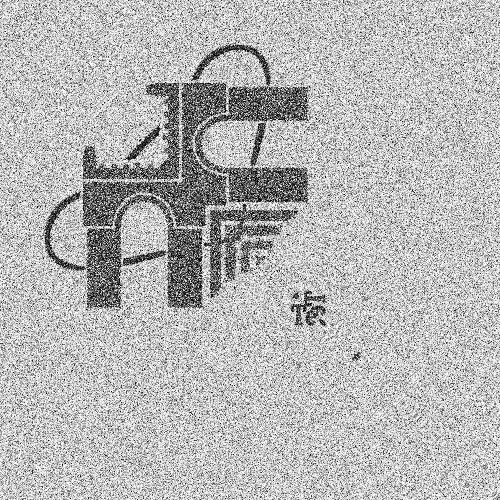}, $\lambda^{[1]}$ \\lower part};
    \node [block, above of=LoDis, node distance=2cm] (UpDis) {\includegraphics[width=0.8cm,]{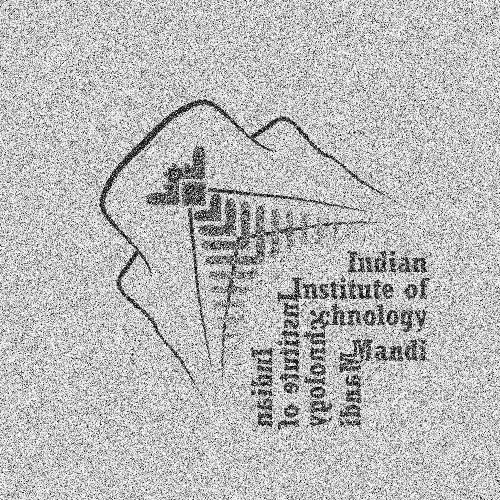}, $\lambda^{[2]}$ \\upper part};
    \node [block, right of=UpDis, node distance=3cm] (UpUnDis) {\includegraphics[width=0.8cm,]{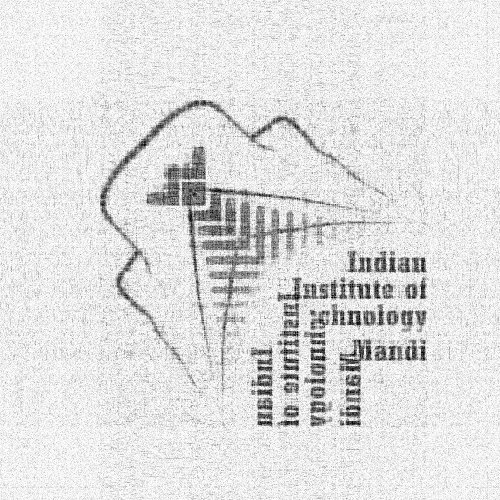}, \\upper correction};
    \node [block, right of=LoDis, node distance=3cm] (LoUnDis) {\includegraphics[width=0.8cm,]{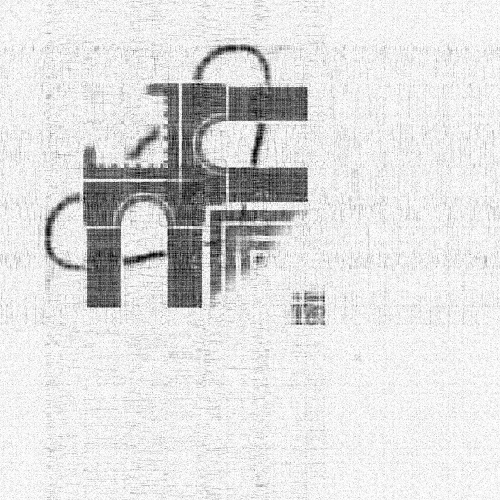}, \\lower correction};
    \node [decision, right of=rightofdistorted] (end) {Corrected matrices merged \includegraphics[width=0.8cm]{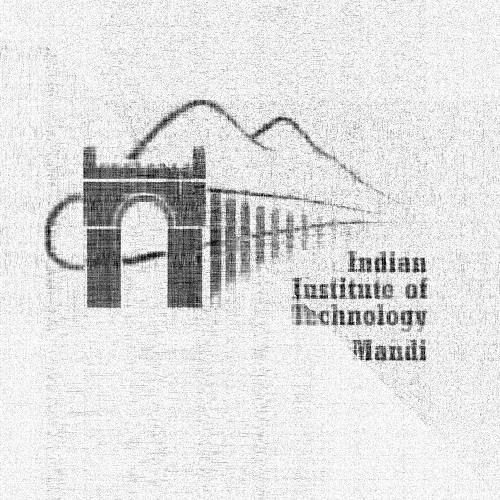}};

    \path [line] (original) |- (Up);
    \path [line] (original) |- (Lo);
    \path [line] (Up) -- (eigensendUp);
    \path [line] (Lo) -- (eigensendLo);
    
    \path [line,dashed] (original) -- node {distortion}(distorted);
    \path [line,dashed] (eigensendUp) -- node {pass on} (UpDis);
    \path [line,dashed] (eigensendLo) -- node {pass on}(LoDis);

    \path [line] (distorted) -- (LoDis);
    \path [line] (LoDis) -- (UpDis);
    \path [line] (UpDis) -- node{$\Phi$}(UpUnDis);
    \path [line] (LoDis) -- node{$\Phi$}(LoUnDis);
    \path [line] (UpUnDis) -| (end);
    \path [line] (LoUnDis) -| (end);
\end{tikzpicture}
\end{center}

\label{fig:imagexperiment}
\caption{Image distrotion experiment set up.}

\end{figure}

The visualization of the results are made by interpreting values from $0$ to $1$ to $2$ as linear interpolations from black to white and back to black while continuing this periodically.

Figure \ref{fig:iitDistortion} shows two types of images with distinct resolutions being distorted by increased values of $d\in \{0, 10, 20, 30 ,40\}$. On the left side you see the raw distortion of the original images and on the right side is the correction made only through the knowledge of the eigenvalues from the original images. Both examples evidently show us why eigenvalues are a very important degrees of freedom in symmetric matrices. Even though they make up only $n$ degrees from the $\frac{n^2+n}{2}$ degrees of freedom of a symmetric matrix, they visually have great influence on smoothing areas of similar colors and making edges more apparent.
	
	This means that if we regard the image as an operator and the noise as numerical errors we can expect a similar correction to happen by applying the NHIEP solution. The NHIEP solution thus has the natural property of reducing noise and errors. The mathematical reason for this is the nature of the noise being independent and unbiased on each pixel and the attempt of our algorithm to undo this noise.

\begin{figure}
\centering
\includegraphics[width=6cm]{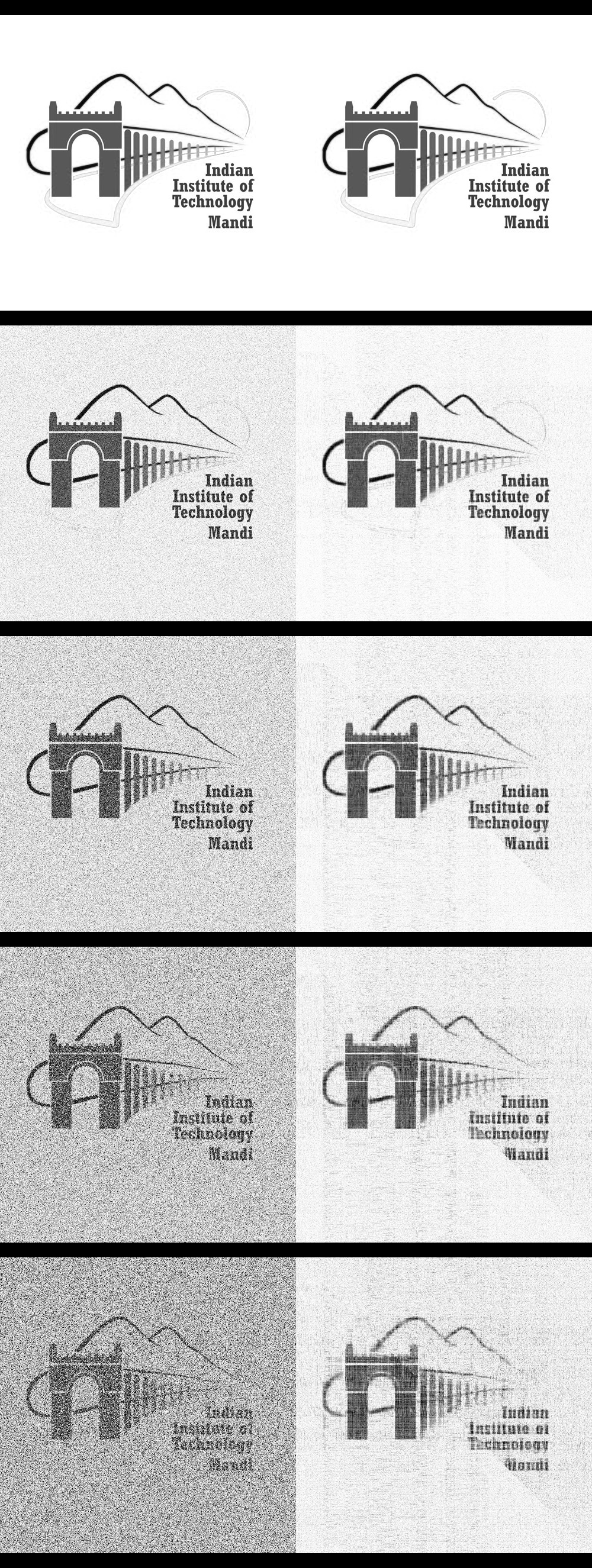}
\includegraphics[width=6cm]{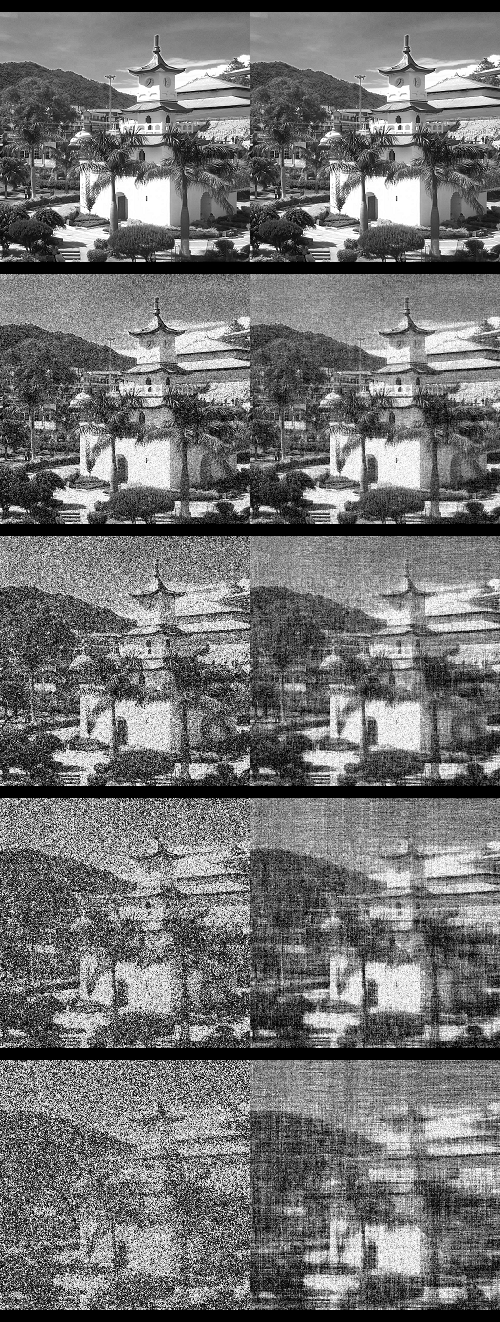}
\label{fig:iitDistortion}
\caption{Left: Logo of the Indian Institute of Technology Mandi. 500x500 pixel image. Right: Central market place in Mandi. 250x250 pixel image. From top to bottom, increasing distortions d = 0, 10, 20, 30 ,40. left side distorted image $M$, right side corrected image $\Phi(M,\lambda)$.}
\end{figure}

\section{Conclusion and Future Ideas}

\quad We have solved the nearest hermitian inverse eigenvalue problem (NHIEP) as fine as possible and seen an algorithm that returns a solution for any hermitian matrix $M\in H(n)$ and $\lambda\in\Rn$. We proved it's right fullness and interpreted its action in section \reff{sec:dos}.

The main theorem number \reff{thr:nhiep} happens to be just as applicable to nearest symmetric inverse eigenvalue problems as to NHIEP. We have also shown how very useful the sorting of eigenvalues can be inside the diagonalization to simplify the computation of the solution. With theorem \reff{thr:l2nb} we also proved an easily comprehensible lower bound for the 2-norm of a difference between hermitian matricies.

The discussion of section \reff{sec:d} acknowledged some of the effects of the NHIEP solution on symmetric matrices. In subsection \reff{subsec:ipr} we have experimentally assessed the average improvement ratio under normal distortions and how that effect decreases with the degree of freedom ratio and in section \reff{subsec:ic} we opened an insight on how the distortion of eigenvalues can affect the overall matrix of a grey scale image and thus observed a natural noise reducing effect given by eigenvalues.

The aim of this \paper was to establish this idea together with a proof and to give an impulse of what can be archived with inverse eigenvalues problems. It expands the way we think of eigenvalues and may perhaps spark a new idea on how to use this in applications.

One future task to be archived is the inclusion of the $1$- $\infty$- and the Frobenius-norm solutions by creating other functions like $\Psi$.

\bibliographystyle{amsalpha}
\bibliography{Ref}

\end{document}